
\documentclass{amsart}

\usepackage{amsmath,amssymb,amsfonts,enumerate,amsthm,graphicx,color}

\renewcommand{\dim}{\mbox{dim}\,}
\renewcommand{\dim}{\mbox{dim}\,}

\newcommand{\reg}{\mbox{reg}\,}

\newcommand{\pd}{\mbox{pd}\,}

\newcommand{\T}{\mathrm}

\newtheorem{thm}{Theorem}[section]
\newtheorem{cor}[thm]{Corollary}
\newtheorem{lem}[thm]{Lemma}
\newtheorem{prop}[thm]{Proposition}
\newtheorem{defn}[thm]{Definition}

\newtheorem{ques}[thm]{Question}

\numberwithin{equation}{section}

\begin{document}
\bibliographystyle{amsplain}

\title[Regularity and projective dimension of edge ideal]{Regularity and projective dimension of edge ideal of $C_5$-free vertex decomposable graphs}
\author[F. Khosh-Ahang and S. Moradi]{Fahimeh Khosh-Ahang and Somayeh Moradi$^*$}
\address{Fahimeh Khosh-Ahang, Department of Mathematics,
 Ilam University, P.O.Box 69315-516, Ilam, Iran.}
\email{khoshahang@ferdowsi.um.ac.ir}
\address{Somayeh Moradi, Department of Mathematics,
 Ilam University, P.O.Box 69315-516, Ilam, Iran and School of Mathematics, Institute
 for Research in Fundamental Sciences (IPM), P.O.Box: 19395-5746, Tehran, Iran.} \email{somayeh.moradi1@gmail.com}

\keywords{depth, edge ideal, projective dimension, regularity, vertex
decomposable\\
$*$Corresponding author}
\subjclass[2000]{Primary Subjects:  13D02, 13P10,    Secondary
Subjects: 16E05}

\begin{abstract}

\noindent In this paper, we explain the regularity, projective
dimension and depth of edge ideal of some classes of graphs in
terms of invariants of graphs. We show that for a $C_5$-free vertex
decomposable graph $G$, $\T{reg}(R/I(G))= c_G$, where $c_G$ is the
maximum number of $3$-disjoint edges in $G$. Moreover for this
class of graphs we characterize $\T{pd}(R/I(G))$ and
$\T{depth}(R/I(G))$. As a corollary we describe these invariants in
forests and sequentially Cohen-Macaulay bipartite graphs.
\end{abstract}

\maketitle
\section*{Introduction}
Let $G$ be a simple graph with vertex set
$V(G)=\{x_1,\ldots,x_n\}$ and edge set $E(G)$. If $k$ is a field,
the edge ideal of $G$ in the polynomial ring $R=k[x_1,\ldots,x_n]$
is defined as $I(G)=\langle x_ix_j \ | \ \{ x_i,x_j\}\in
E(G)\rangle$. The edge ideal of a graph was first considered by
Villarreal \cite{V}. Finding connections between algebraic
properties of an edge ideal and invariants of graph, for instance
explaining the regularity, projective dimension and depth of the
ring $R/I(G)$ by some information from $G$, is of great interest.
For some classes of graphs like trees, chordal graphs  and
shellable bipartite graphs some of these invariants are studied in
\cite{HT1}, \cite{Kimura}, \cite{Moh}, \cite{Moradi}, \cite{VT} and \cite{Zheng}. Zheng in
\cite{Zheng} described regularity and projective dimension for
tree graphs. It was proved that if $G$ is a tree, then
$\reg(R/I(G))=c_G$, where $c_G$ is the maximum number of pairwise
$3$-disjoint edges in $G$. In \cite{HT1} this description of
regularity has been extended to chordal graphs. Also, Kimura in
\cite{Kimura} extended the characterization of projective
dimension in \cite{Zheng} to chordal graphs.
 Moreover, Van Tuyl in \cite{VT} proved that the equality $\reg(R/I(G))=c_G$ holds when $G$ is a sequentially
 Cohen-Macaulay bipartite graph.

 In this paper, we consider the class of $C_5$-free vertex decomposable graphs which contains the classes  of forests and sequentially
 Cohen-Macaulay bipartite graphs. For this class of graphs we show that
$\reg(R/I(G))=c_G$, which generalizes \cite [Theorem 3.3]{VT}  and
\cite [Theorem 2.18]{Zheng}. Also, we describe the projective
dimension and depth of the ring $R/I(G)$ for this class of graphs and we gain
some results that can be compared with \cite[Corollary 2.13]{Zheng} and \cite[Theorem 5.3 and
Corollary 5.6]{Kimura}.

In Section 1, we recall some definitions and theorems that we use
in the sequel. In Section 2, first we show that $\reg(R/I(G))=c_G$
for a $C_5$-free vertex decomposable graph (Theorem \ref{sh}).
Then in Corollary \ref{cor1} we deduce that this equality holds
for forests and sequentially
 Cohen-Macaulay bipartite graphs.
 The notion $d'_G$ was introduced in \cite{Kimura} and
 it was shown that for a chordal graph $G$, one has $\pd(R/I(G))=\T{bight}(I(G))=d'_G$, where $\T{bight} (I(G))$ is the maximum height among the minimal prime divisors of $I(G)$.
In Theorem \ref{bight1}, we show that for a $C_5$-free vertex
decomposable graph $G$ we also have these equalities and as some
corollaries we show this is true for chordal graphs, forests and
sequentially
 Cohen-Macaulay bipartite graphs (see Corollaries \ref{kim} and \ref{cor2}). Moreover, in Corollary
 \ref{depth} we give a description for $\T{depth}(R/I(G))$ for a
  $C_5$-free vertex decomposable graph $G$.

\section{Preliminaries}

Let $G$ be a graph with vertex set $V(G)$ and edge set $E(G)$. For
a vertex $v$ of $G$ the set of all neighborhoods of $v$ is denoted
by $N_G(v)$ or briefly $N(v)$ and we denote the set $N_G(v)\cup
\{v\}$ by $N_G[v]$ or briefly $N[v]$.  An independent set of $G$
is a subset $F\subseteq V(G)$ such that $e\not\subseteq F$, for any
$e\in E(G)$.

Vertex decomposability was introduced by Provan and Billera in
\cite{Provan+Billera} in the pure case and extended to the
non-pure case by Bj\"{o}rner and Wachs in \cite{BW} and
\cite{BW2}. We need and use the following definition of vertex
decomposable graph which is an interpretation of the definition of
vertex decomposability for the independence complex of a graph
studied in \cite{DE} and \cite{W}.

\begin{defn}\label{1.1}
A graph $G$ is recursively defined to be \textbf{vertex decomposable} if
$G$ is totally disconnected (with no edges), or if
\begin{itemize}
\item[(i)] there is a vertex $x$ in $G$ such that $G\setminus
\{x\}$ and $G\setminus N[x]$ are both vertex decomposable, and
\item[(ii)] no independent set in $G\setminus N[x]$ is a maximal
independent set in $G\setminus \{x\}$.
\end{itemize}
\end{defn}
A vertex $x$ which satisfies in the second condition is called a
\textbf{shedding vertex} of $G$.

The \textbf{Castelnuovo-Mumford regularity} (or simply regularity)
of a $\mathbb{Z}$-graded $R$-module $M$ is defined as
$$\T{reg}(M) := \max\{j-i \ | \ \beta_{i,j}(M)\neq 0\}.$$ Also, the projective dimension of $M$ is defined as
$$\T{pd}(M) := \max\{i \ | \ \beta_{i,j}(M)\neq 0 \ \text{for some}\ j\},$$
where $\beta_{i,j}(M)$ is the $(i,j)$-th graded Betti number of $M$.

Let $G$ be a graph. A subset $C\subseteq V(G)$ is called a
\textbf{vertex cover} of $G$ if it intersects all edges of $G$. A
vertex cover of $G$ is called \textit{minimal} if it has no proper
subset which is also a vertex cover of $G$. When $G$ is a graph
with $V(G)=\{x_1, \dots, x_n\}$ and $C=\{x_{i_1}, \dots,
x_{i_t}\}$ is a vertex cover of $G$, by $x^C$ we mean the monomial
$x_{i_1}\dots x_{i_t}$ in the ring of polynomials $R=k[x_1, \dots,
x_n]$. For a monomial ideal $I=\langle x_{11}\cdots
x_{1n_1},\ldots,x_{t1}\cdots x_{tn_t}\rangle$ of the polynomial
ring $R$, the \textbf{Alexander dual ideal} of $I$, denoted by
$I^{\vee}$, is defined as
$$I^{\vee}:=\langle x_{11},\ldots, x_{1n_1}\rangle\cap \cdots \cap \langle x_{t1},\ldots, x_{tn_t}\rangle.$$
One can see that, for a graph $G$,
$$I(G)^{\vee}=\langle x^C \ | \ C\ \text{is a minimal vertex cover of G} \rangle $$
and
$$I(G)=\bigcap\ (P_C \ | \ C\  \text{is a minimal vertex cover of}\  G),$$ where  $P_C=\langle x_i \ | \ x_i\in C\rangle$.
The \textbf{big height} of $I(G)$, denoted by $\T{bight}(I(G))$, is
defined as the maximum height among the minimal prime divisors of
$I(G)$, that is the maximal size of a minimal vertex cover of $G$. In fact, for a graph $G$ we have
$$\T{bight}(I(G))=\max\{|C| \ | \ C \text{\ is a minimal vertex cover of G}\}.$$

The following theorem, which was proved in \cite{T}, is one of our
main tools in the study of the regularity of the ring $R/I(G)$.

\begin{thm}(See \cite[Theorem 2.1]{T}.)\label{1.3}
Let $I$ be a square-free monomial ideal. Then
$\T{pd}(I^{\vee})=\T{reg}(R/I)$.
\end{thm}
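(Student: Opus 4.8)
The plan is to deduce this from Hochster's formulas for square-free monomial ideals together with combinatorial Alexander duality. Write $I=I_{\Delta}$ for the Stanley--Reisner ideal of a simplicial complex $\Delta$ on the vertex set $[n]=\{1,\dots,n\}$; then $I^{\vee}=I_{\Delta^{\vee}}$, where $\Delta^{\vee}=\{\,F\subseteq[n]:[n]\setminus F\notin\Delta\,\}$ is the Alexander dual complex (this is immediate from the definition of the dual ideal recalled above). I will show that both $\pd(I^{\vee})$ and $\reg(R/I)$ are equal to
\[
1+\max\{\,\ell : \tilde{H}^{\ell}(\T{lk}_{\Delta}\sigma;k)\neq 0 \ \text{for some face}\ \sigma\in\Delta\,\},
\]
where $\T{lk}_{\Delta}\sigma$ denotes the link of $\sigma$ in $\Delta$ and $\tilde{H}$ stands for reduced simplicial (co)homology with coefficients in the field $k$.

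For $\pd(I^{\vee})$: Hochster's formula for graded Betti numbers, applied to $\Delta^{\vee}$, gives that $\beta_{i,j}(I_{\Delta^{\vee}})=\beta_{i+1,j}(R/I_{\Delta^{\vee}})$ is nonzero exactly when $\tilde{H}_{j-i-2}\bigl((\Delta^{\vee})|_{W};k\bigr)\neq 0$ for some $W\subseteq[n]$ with $|W|=j$. The essential combinatorial point is the identity
\[
(\Delta^{\vee})|_{W}=\bigl(\T{lk}_{\Delta}([n]\setminus W)\bigr)^{\vee},
\]
where the Alexander dual on the right is formed inside the vertex set $W$; this is checked directly from the definitions, the only delicate case being $[n]\setminus W\notin\Delta$, in which both sides are the full simplex on $W$. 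Combinatorial Alexander duality, $\tilde{H}_{\ell}(\Gamma^{\vee};k)\cong\tilde{H}^{|V(\Gamma)|-\ell-3}(\Gamma;k)$, applied inside $W$ (so that $|V(\Gamma)|=|W|=j$), then turns $\tilde{H}_{j-i-2}((\Delta^{\vee})|_{W};k)$ into $\tilde{H}^{i-1}(\T{lk}_{\Delta}([n]\setminus W);k)$. Letting $\sigma=[n]\setminus W$ range over all subsets of $[n]$ (those with $\sigma\notin\Delta$ contribute nothing, their link being void), we get that $\beta_{i}(I_{\Delta^{\vee}})\neq 0$ if and only if $\tilde{H}^{i-1}(\T{lk}_{\Delta}\sigma;k)\neq 0$ for some $\sigma\in\Delta$, so $\pd(I^{\vee})$ equals the quantity displayed above.

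For $\reg(R/I)$: I will use the local-cohomology description of regularity, $\reg M=\max\{\,a+t : H^{a}_{\fm}(M)_{t}\neq 0\,\}$, together with Hochster's formula for the local cohomology of a Stanley--Reisner ring: $H^{a}_{\fm}(k[\Delta])$ is $\mathbb{Z}^{n}$-graded, supported in degrees $\alpha\leq 0$, and for such an $\alpha$ one has $H^{a}_{\fm}(k[\Delta])_{\alpha}\neq 0$ precisely when $\tilde{H}^{a-|\sigma|-1}(\T{lk}_{\Delta}\sigma;k)\neq 0$, where $\sigma=\{\,i:\alpha_{i}<0\,\}$; in particular this depends on $\alpha$ only through $\sigma$. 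For a fixed face $\sigma$ with this reduced cohomology nonzero, the $\mathbb{Z}$-degrees $t=\sum_{i}\alpha_{i}$ that occur are exactly the integers $\leq -|\sigma|$, so the largest value of $a+t$ contributed by $\sigma$ is $a-|\sigma|$. Maximizing over $a$ and over $\sigma\in\Delta$, and substituting $\ell=a-|\sigma|-1$, yields once more the displayed quantity; hence $\pd(I^{\vee})=\reg(R/I)$.

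The index bookkeeping is routine; the step that really needs care is the simplicial identity $(\Delta^{\vee})|_{W}=(\T{lk}_{\Delta}([n]\setminus W))^{\vee}$ with its degenerate cases, since that is exactly where Alexander duality is interchanging a \emph{restriction of the dual complex} with a \emph{dual of a link}. One could attempt to avoid local cohomology by arguing instead that the largest $\ell$ with $\tilde{H}_{\ell}$ of an induced subcomplex of $\Delta$ nonzero equals the largest $\ell$ with $\tilde{H}_{\ell}$ of a link in $\Delta$ nonzero; but that combinatorial identity is essentially equivalent to the theorem itself, so it is cleaner to invoke the link form of Hochster's formula as above.
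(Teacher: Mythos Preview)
The paper does not supply a proof of this statement; Theorem~\ref{1.3} is quoted from \cite{T} and used throughout as a black box, so there is nothing in the paper to compare your argument against.

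Your argument is correct and is the standard route to Terai's theorem. Hochster's formula for multigraded Betti numbers, combined with the identity $(\Delta^{\vee})|_{W}=(\lk_{\Delta}([n]\setminus W))^{\vee}$ and combinatorial Alexander duality, expresses $\pd(I_{\Delta^{\vee}})$ in terms of the reduced cohomology of links in $\Delta$; Hochster's formula for the local cohomology of $k[\Delta]$ expresses $\reg(k[\Delta])$ in the same terms, and the two descriptions match. The one place to be slightly careful is the degenerate case $\sigma=[n]\setminus W\notin\Delta$: you handle it correctly by observing that $(\Delta^{\vee})|_{W}$ is then the full simplex on $W$, which has no reduced homology, so such $W$ contribute nothing to any $\beta_{i,j}(I_{\Delta^{\vee}})$. (One should not instead argue via $\tilde{H}^{i-1}$ of the void link, since the void complex has $\tilde{H}^{-1}=k$ and combinatorial Alexander duality is typically stated only for complexes strictly between the void complex and the full simplex; but your direct check avoids this pitfall.)
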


Two edges $\{x,y\}$ and $\{w,z\}$ of $G$ are called
\textbf{$3$-disjoint} if the induced subgraph of $G$ on
$\{x,y,w,z\}$ consists of exactly two disjoint edges or
equivalently, in the complement of $G$, the induced graph on $\{x,
y,w, z\}$ is a four-cycle. A subset $A\subseteq E(G)$ is called a
\textbf{pairwise $3$-disjoint set of edges in $G$} if every pair
of distinct edges in $A$ are $3$-disjoint in $G$. Set
$$c_G=\max\{|A| \ | \ A\  \text{is a pairwise $3$-disjoint set of
edges in}\ G\}.$$

The graph $B$ with vertex set $V(B)=\{z,w_1,\dots,w_d\}$ and edge
set $E(B)=\{\{z,w_i\} \ | \ 1\leq i\leq d\}$ is called a
\textbf{bouquet}. The vertex $z$ is called the \textbf{root} of
$B$, the vertices $w_i$ \textbf{flowers} of $B$ and the edges
$\{z,w_i\}$ the \textbf{stems} of $B$. A subgraph of $G$ which is
a bouquet is called a bouquet of $G$. Let
$\mathcal{B}=\{B_1,\ldots,B_n\}$ be a set of bouquets of $G$. We
use the following notations.
$$\ \ \ F(\mathcal{B})=\{w\in V(G)\ | \ w \text{ is a flower of some bouquet in } \mathcal{B}\}$$
$$R(\mathcal{B})=\{z\in V(G) \ | \ z \text{ is a root of some bouquet in } \mathcal{B}\}$$
$$S(\mathcal{B})=\{e\in E(G) \ | \ e \text{ is a stem of some bouquet in } \mathcal{B}\}$$

Kimura in \cite{Kimura} introduced two notion of disjointness
of a set of bouquets.

\begin{defn}(See \cite[Definition 2.1]{Kimura}.)
A set of bouquets $\mathcal{B}=\{B_1,\ldots,B_n\}$ is called
\textbf{strongly disjoint}  in $G$ if
\begin{itemize}
\item[(i)] $V(B_i)\cap V(B_j)=\emptyset$ for all $i\neq j$, and
\item[(ii)] we can choose a stem $e_i$ from each bouquet $B_i\in
\mathcal{B}$ such that $\{e_1,\ldots,e_n\}$ is pairwise
$3$-disjoint in $G$.
\end{itemize}
\end{defn}

\begin{defn}(See \cite[Definition 5.1]{Kimura}.)
A set of bouquets $\mathcal{B}=\{B_1,\ldots,B_n\}$ is called
\textbf{semi-strongly disjoint}  in $G$ if
\begin{itemize}
\item[(i)] $V(B_i)\cap V(B_j)=\emptyset$ for all $i\neq j$, and
\item[(ii)] $R(\mathcal{B})$ is an independent set of
$G$.
\end{itemize}
\end{defn}

Set $$d_G:=\max\{|F(\mathcal{B})| \ | \ \mathcal{B}\
\text{is a strongly disjoint set of bouquets of } G \}$$ and
$$\ \ \ \ \ \ d'_G:=\max\{|F(\mathcal{B})| \ | \ \mathcal{B}\ \text{is a semi-strongly disjoint set of bouquets of } G \}.$$

It is easy to see that any pairwise 3-disjoint set of edges in $G$
is a strongly disjoint set of bouquets in $G$ and, any strongly
disjoint set of bouquets is semi-strongly disjoint. In this
regard, we have the inequalities
\begin{equation}\label{1.1}
c_G\leq d_G\leq d'_G.
\end{equation}
As some auxiliary tools, we need some results of \cite{HT1} and \cite{Kimura}, which present lower bounds for the regularity and projective dimension of the ring $R/I(G)$.
We end this section by recalling these results.

\begin{thm}\label{1.6}
\begin{itemize} For any graph $G$, the following hold.
\item[(i)] (See \cite[Theorem 6.5]{HT1}.)
$\T{reg}(R/I(G))\geq c_G$.
\item[(ii)] (See \cite[Theorem 3.1]{Kimura}.) $\T{pd}(R/I(G))\geq d_G$.
\end{itemize}
\end{thm}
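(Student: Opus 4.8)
Both parts are due to other authors (part (i) is \cite[Theorem 6.5]{HT1} and part (ii) is \cite[Theorem 3.1]{Kimura}), so the plan is to recall the mechanism rather than reprove everything from scratch. The common engine is Hochster's formula for the multigraded Betti numbers of a Stanley--Reisner (here, edge) ideal: for $\sigma\subseteq V(G)$ the number $\beta_{i,\sigma}(R/I(G))$ is the $k$-dimension of $\widetilde{H}_{|\sigma|-i-1}(\mathrm{Ind}(G)|_\sigma;k)$, where $\mathrm{Ind}(G)$ is the independence complex of $G$ and $\mathrm{Ind}(G)|_\sigma=\mathrm{Ind}(G|_\sigma)$ is the independence complex of the induced subgraph on $\sigma$. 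Hence, to bound $\reg(R/I(G))$ (resp. $\pd(R/I(G))$) from below it suffices to produce one vertex set $\sigma$ and one homological degree $i$ with $\widetilde{H}_{|\sigma|-i-1}(\mathrm{Ind}(G|_\sigma);k)\neq 0$ and with $|\sigma|-i$ (resp. $i$) of the required size.

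For (i): given a pairwise $3$-disjoint set $A=\{e_1,\dots,e_{c_G}\}$, set $\sigma=e_1\cup\cdots\cup e_{c_G}$, so $|\sigma|=2c_G$. By the very definition of $3$-disjointness the induced subgraph $G|_\sigma$ is precisely a disjoint union of the $c_G$ edges $e_l$, hence $\mathrm{Ind}(G|_\sigma)$ is the join of $c_G$ copies of the $0$-sphere $S^0$; this join is homotopy equivalent to $S^{c_G-1}$, so $\widetilde{H}_{c_G-1}(\mathrm{Ind}(G|_\sigma);k)=k$. Taking $i=c_G$ (so that $|\sigma|-i-1=c_G-1$) in Hochster's formula gives $\beta_{c_G,\sigma}(R/I(G))\neq 0$, a Betti number in homological degree $c_G$ and internal degree $2c_G$; therefore $\reg(R/I(G))\geq 2c_G-c_G=c_G$. (Equivalently: $I(G|_\sigma)$ is a complete intersection of $c_G$ quadrics, whose quotient has regularity $c_G$, and regularity cannot drop when one passes to an induced subgraph.)

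For (ii): take a strongly disjoint set of bouquets $\mathcal{B}=\{B_1,\dots,B_n\}$ with $|F(\mathcal{B})|=d_G$, and fix, as in the definition, stems $e_1,\dots,e_n$ (one per bouquet) that are pairwise $3$-disjoint. Put $\sigma=F(\mathcal{B})\cup R(\mathcal{B})$, so $|\sigma|=d_G+n$. On the vertex set of a single bouquet $B_i$ (a star $K_{1,d_i}$) the independence complex is the disjoint union of the $(d_i-1)$-simplex on the flowers and the isolated point given by the root, so it has $\widetilde{H}_0=k$. If $G|_\sigma$ were exactly the disjoint union of the $B_i$, the reduced-homology formula for joins would give $\widetilde{H}_{n-1}(\mathrm{Ind}(G|_\sigma);k)=k$, and Hochster's formula with $i=d_G$ (so $|\sigma|-i-1=n-1$) would yield $\beta_{d_G,\sigma}(R/I(G))\neq 0$, i.e. $\pd(R/I(G))\geq d_G$.

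The genuine difficulty — and the real content of \cite[Theorem 3.1]{Kimura} — is that $G|_\sigma$ may acquire extra edges joining flowers or roots of distinct bouquets, so $\mathrm{Ind}(G|_\sigma)$ need not be the join above and the naive join argument collapses. The role of the chosen $3$-disjoint stems $e_1,\dots,e_n$ is precisely to isolate, in spite of those extra edges, a nonvanishing reduced homology class in degree $n-1$ (one works with a multidegree supported on the chosen stems and uses $3$-disjointness to certify that the corresponding cycle is not a boundary). Formally packaging this — rather than (i), which is essentially a one-line induced-subgraph observation — is the step I expect to be the main obstacle in a self-contained treatment.
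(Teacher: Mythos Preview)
The paper gives no proof of Theorem~\ref{1.6} at all: it is stated in the Preliminaries with citations to \cite{HT1} and \cite{Kimura} and then used as a black box. Your proposal therefore goes beyond the paper by sketching the underlying Hochster-formula arguments. Your treatment of (i) is correct and complete; your treatment of (ii) correctly isolates the genuine obstacle (extra edges in $G|_\sigma$ between different bouquets) and honestly defers to \cite{Kimura} for the actual construction of the nonvanishing class, which is appropriate since the paper itself does not reproduce that argument either.
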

\section{Main results}

In this section, among other things, we give some descriptions
for the regularity, projective dimension and depth  of the ring
$R/I(G)$, when $G$ is a $C_5$-free vertex decomposable graph. This
class of graphs contains some nice classes like forests and
sequentially Cohen-Macaulay bipartite graphs (see \cite[Theorem
3.2]{FV} and \cite[Theorem 2.10]{VT}). For this purpose, we use
the duality concept in Theorem \ref{1.3} and induction. In this
way, we need the following lemma.

\begin{lem}\label{exact}
Suppose that $G$ is a graph, $x\in V(G)$ and
$N_G(x)=\{y_1,\ldots,y_t\}$. Let $G'=G\setminus \{x\}$ and
$G''=G\setminus N_G[x]$. Then

\begin{itemize}
\item[(i)]  $I(G)^{\vee}=xI(G')^{\vee}+y_1\cdots
y_tI(G'')^{\vee}$;

\item[(ii)] $xy_1\cdots y_tI(G'')^{\vee}=xI(G')^{\vee}\cap
y_1\cdots y_tI(G'')^{\vee}$;

\item[(iii)] there exists a short exact sequence of $R$-modules
$$0\rightarrow xy_1\cdots
y_tI(G'')^{\vee}\rightarrow xI(G')^{\vee}\oplus y_1\cdots
y_tI(G'')^{\vee}\rightarrow I(G)^{\vee}\rightarrow 0.$$
\end{itemize}
\end{lem}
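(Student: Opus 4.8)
The plan is to compute everything directly from the description of the Alexander dual in terms of minimal vertex covers. First I would set up notation: recall that $I(G)^{\vee} = \langle x^C \mid C \text{ a minimal vertex cover of } G\rangle$, and similarly for $G'$ and $G''$. The key combinatorial observation driving part (i) is the dichotomy for a minimal vertex cover $C$ of $G$: either $x \in C$, or $x \notin C$. If $x \notin C$, then since $C$ must cover every edge $\{x, y_i\}$, we get $N_G(x) = \{y_1,\dots,y_t\} \subseteq C$, and moreover $C$ must cover all edges of $G''=G\setminus N_G[x]$; conversely $C\setminus N_G(x)$ (or rather $C$ with $N_G(x)$ removed) relates to a vertex cover of $G''$. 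If $x \in C$, then $C \setminus \{x\}$ should be a minimal vertex cover of $G' = G\setminus\{x\}$. So the first main step is to make these two correspondences precise at the level of minimal vertex covers, being careful about the word \emph{minimal}: I would show that minimal vertex covers of $G$ containing $x$ are exactly $\{x\}\cup C'$ for $C'$ a minimal vertex cover of $G'$, and that the remaining minimal vertex covers of $G$ are exactly $N_G(x)\cup C''$ for $C''$ a minimal vertex cover of $G''$. This then translates directly into the monomial statement $I(G)^{\vee} = x\,I(G')^{\vee} + y_1\cdots y_t\, I(G'')^{\vee}$, since the generators $x^C$ split into those divisible by $x$ (giving $x \cdot x^{C'}$) and those divisible by $y_1\cdots y_t$ (giving $y_1\cdots y_t \cdot x^{C''}$).

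For part (ii), I would argue that the intersection of two monomial ideals is generated by the least common multiples of pairs of generators. A generator of $x I(G')^{\vee}$ has the form $x \cdot x^{C'}$ and a generator of $y_1\cdots y_t I(G'')^{\vee}$ has the form $y_1\cdots y_t \cdot x^{C''}$; their lcm is $x y_1\cdots y_t \cdot \mathrm{lcm}(x^{C'}, x^{C''})$. The point is that since $x \notin V(G')$... actually $x\in V(G')$ but $x$ does not appear in any generator $x^{C'}$ because $C'\subseteq V(G')$ and... wait, more carefully: $x^{C'}$ never involves $x$ (as $x\notin C'$), and $x^{C''}$ never involves $x$ or any $y_i$ (as $C''\subseteq V(G'')$ which excludes $N_G[x]$). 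I would use this to show $\mathrm{lcm}(x^{C'},x^{C''})$ and the monomials $x y_1\cdots y_t$ interact cleanly, and that the set of all such lcms is precisely the generating set of $x y_1\cdots y_t I(G'')^{\vee}$; the containment "$\supseteq$" is clear, and for "$\subseteq$" one checks that $\mathrm{lcm}(x^{C'}, x^{C''})$ is always a multiple of $x^{C''}$ for an appropriate choice, using that every minimal vertex cover $C''$ of $G''$ extends to a minimal vertex cover of $G'$ (namely $N_G(x)$ is disjoint from $V(G'')$, so $C'' \cup N_{G'}(\text{stuff})$...); here I expect to invoke the fact that $C''$ together with the neighbors in $G'$ of the $y_i$'s yields a vertex cover of $G'$ refining to a minimal one containing $C''$. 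Part (iii) is then formal: given (i) and (ii), the standard short exact sequence $0 \to A\cap B \to A \oplus B \to A+B \to 0$ for submodules $A = x I(G')^{\vee}$, $B = y_1\cdots y_t I(G'')^{\vee}$ of $R$, with the maps $c\mapsto (c,c)$ and $(a,b)\mapsto a-b$, gives exactly the claimed sequence.

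The main obstacle I anticipate is the bookkeeping around \emph{minimality} of vertex covers in part (i) and the corresponding lcm computation in part (ii): it is easy to write down a vertex cover of $G$ from a vertex cover of $G'$ or $G''$, but one must verify that minimality is preserved in both directions, and in particular that $N_G(x)\cup C''$ is a \emph{minimal} vertex cover of $G$ when $C''$ is minimal for $G''$ — this requires that no vertex of $N_G(x)$ is redundant, i.e. each $y_i$ is the unique cover of some edge, which holds because $\{x,y_i\}$ is covered only by $y_i$ in such a $C$. Once the vertex-cover correspondences are nailed down, everything else is routine monomial algebra. I would organize the write-up as: (1) the two lemmas on minimal vertex covers; (2) deduce (i); (3) the lcm computation for (ii); (4) the formal exact sequence for (iii).
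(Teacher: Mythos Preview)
Your overall strategy matches the paper's: split on whether $x$ lies in a minimal vertex cover of $G$, translate to monomials for (i), then handle (ii) by a direct computation and (iii) formally via $0\to A\cap B\to A\oplus B\to A+B\to 0$. However, one of the intermediate lemmas you plan to prove is false: if $C'$ is a minimal vertex cover of $G'$, then $\{x\}\cup C'$ need \emph{not} be a minimal vertex cover of $G$. For instance, take $G$ to be the path $x\,$--$\,y_1\,$--$\,z$; then $C'=\{y_1\}$ is a minimal vertex cover of $G'$, but $\{x,y_1\}$ is not minimal in $G$ since $\{y_1\}$ already covers $G$. So the bijection you propose between minimal vertex covers fails in that direction, and you would get stuck trying to prove it.

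The paper avoids this entirely by never claiming minimality is preserved. For the inclusion $xI(G')^{\vee}\subseteq I(G)^{\vee}$ it simply notes that $\{x\}\cup C'$ is \emph{some} vertex cover of $G$, hence contains a minimal one $D$, so $x^{D}\mid x\cdot x^{C'}$ and the monomial lies in $I(G)^{\vee}$; the same trick handles the other three containments in (i). For (ii), your lcm approach is valid but the paper's argument is shorter: given a monomial $f=xf_1=y_1\cdots y_t f_2$ in the intersection with $f_2\in I(G'')^{\vee}$, divisibility forces $x\mid f_2$, and since no generator of $I(G'')^{\vee}$ involves $x$ one gets $f_2/x\in I(G'')^{\vee}$, whence $f\in xy_1\cdots y_t\,I(G'')^{\vee}$. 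Drop the minimality bookkeeping and your plan goes through.
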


\begin{proof}
$(i)$ For any minimal vertex cover $C$ of $G$, if $x\in C$, then
clearly $C'=C\setminus \{x\}$ is a vertex cover of $G'$, so
$x^C=xx^{C'}\in xI(G')^{\vee}$. Now let $x\notin C$. Then
$\{y_1,\ldots, y_t\}\subseteq C$ and $C''=C\setminus \{y_1,\ldots,
y_t\}$ is a vertex cover of $G''$. Therefore, $x^C=y_1\cdots
y_tx^{C''}\in y_1\cdots y_tI(G'')^{\vee}$. Thus
$I(G)^{\vee}\subseteq xI(G')^{\vee}+y_1\cdots y_tI(G'')^{\vee}$.

Conversely, for any minimal vertex cover $C'$ of $G'$,
$C'\cup\{x\}$ is a vertex cover of $G$. So $xI(G')^{\vee}\subseteq
I(G)^{\vee}$. Also, if $C''$ is a minimal vertex cover of $G''$,
then $C''\cup \{y_1,\ldots, y_t\}$ is a vertex cover of $G$. Thus
$y_1\cdots y_tI(G'')^{\vee}\subseteq I(G)^{\vee}$. Therefore,
$xI(G')^{\vee}+y_1\cdots y_tI(G'')^{\vee}\subseteq I(G)^{\vee}$.
Hence, $I(G)^{\vee}=xI(G')^{\vee}+y_1\cdots y_tI(G'')^{\vee}$ as
desired.

$(ii)$ For any minimal vertex cover $C''$ of $G''$, the set
$C'=C''\cup \{y_1,\ldots, y_t\}$ is a vertex cover of $G'$. Thus,
$xy_1\cdots y_tx^{C''}=xx^{C'}\in xI(G')^{\vee}\cap y_1\cdots
y_tI(G'')^{\vee}$. Conversely, let $f\in xI(G')^{\vee}\cap
y_1\cdots y_tI(G'')^{\vee}$ be a monomial. Then $f=xf_1=y_1\cdots
y_tf_2$ for some monomials $f_1\in I(G')^{\vee}$ and $f_2\in
I(G'')^{\vee}$. So $f_2=xf_3$ for some $f_3\in I(G'')^{\vee}$.
Thus $f=xy_1\cdots y_tf_3\in xy_1\cdots y_tI(G'')^{\vee}$. The
proof is complete.

$(iii)$ By using $(i)$ and $(ii)$ in the short exact sequence
$$0\rightarrow xI(G')^{\vee}\cap y_1\cdots y_tI(G'')^{\vee}\rightarrow xI(G')^{\vee}\oplus y_1\cdots
y_tI(G'')^{\vee}\rightarrow xI(G')^{\vee}+y_1\cdots
y_tI(G'')^{\vee}\rightarrow 0$$ the result holds.
\end{proof}

Thus we can deduce:

\begin{cor}\label{ineq}
Suppose that $G$ is a graph, $x\in V(G)$ and $|N_G(x)|=t$. Let
$G'=G\setminus \{x\}$ and $G''=G\setminus N_G[x]$. Then
\begin{itemize}
\item[(i)]  $\T{pd}(I(G)^{\vee})\leq \max
\{\T{pd}(I(G')^{\vee}),\T{pd}(I(G'')^{\vee})+1\}$. \item[(ii)]
$\T{reg}(I(G)^{\vee})\leq \max
\{\T{reg}(I(G')^{\vee})+1,\T{reg}(I(G'')^{\vee})+t\}$.
\end{itemize}
\end{cor}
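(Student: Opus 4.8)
The plan is to feed the short exact sequence from Lemma \ref{exact}(iii) into the standard "two out of three" inequalities for projective dimension and for regularity along a short exact sequence, and then to track the effect of the shifts introduced by the monomial factors $x$ and $y_1\cdots y_t$. Concretely, write the sequence as
$$0\rightarrow A\rightarrow B\oplus C\rightarrow I(G)^{\vee}\rightarrow 0,$$
where $A=xy_1\cdots y_tI(G'')^{\vee}$, $B=xI(G')^{\vee}$ and $C=y_1\cdots y_tI(G'')^{\vee}$. The first observation is that multiplication by a (nonzerodivisor) monomial is an isomorphism of $R$-modules up to a grading shift: $B\cong I(G')^{\vee}(-1)$, $C\cong I(G'')^{\vee}(-t)$, and $A\cong I(G'')^{\vee}(-t-1)$. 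Since projective dimension is insensitive to internal degree shifts we get $\pd(B)=\pd(I(G')^{\vee})$, $\pd(C)=\pd(A)=\pd(I(G'')^{\vee})$; for regularity the shift matters: $\reg(B)=\reg(I(G')^{\vee})+1$, $\reg(C)=\reg(I(G'')^{\vee})+t$, and $\reg(A)=\reg(I(G'')^{\vee})+t+1$.

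For part (i), I would apply the general fact that for a short exact sequence $0\to A\to E\to F\to 0$ one has $\pd(F)\le\max\{\pd(E),\pd(A)+1\}$ (read off from the long exact sequence in $\Tor$, or from the mapping cone of a comparison of resolutions). Here $E=B\oplus C$, so $\pd(E)=\max\{\pd(B),\pd(C)\}=\max\{\pd(I(G')^{\vee}),\pd(I(G'')^{\vee})\}$, while $\pd(A)+1=\pd(I(G'')^{\vee})+1$. Combining,
$$\pd(I(G)^{\vee})\le\max\{\pd(I(G')^{\vee}),\ \pd(I(G'')^{\vee}),\ \pd(I(G'')^{\vee})+1\}=\max\{\pd(I(G')^{\vee}),\ \pd(I(G'')^{\vee})+1\},$$
which is exactly (i). For part (ii), the analogous regularity estimate for $0\to A\to E\to F\to 0$ is $\reg(F)\le\max\{\reg(E),\reg(A)-1\}$. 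With $\reg(E)=\max\{\reg(B),\reg(C)\}=\max\{\reg(I(G')^{\vee})+1,\ \reg(I(G'')^{\vee})+t\}$ and $\reg(A)-1=\reg(I(G'')^{\vee})+t$, we obtain
$$\reg(I(G)^{\vee})\le\max\{\reg(I(G')^{\vee})+1,\ \reg(I(G'')^{\vee})+t,\ \reg(I(G'')^{\vee})+t\}=\max\{\reg(I(G')^{\vee})+1,\ \reg(I(G'')^{\vee})+t\},$$
which is (ii).

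The only genuinely delicate point is bookkeeping rather than mathematics: one must be careful that the monomial multiplications really do induce degree-shifted isomorphisms (they do, since $x$ and each $y_i$ are nonzerodivisors on $R$ and hence on any submodule of a free module), and one must use the correct sign convention in the homological-shift inequalities — it is $\reg(A)-1$, not $\reg(A)+1$, that appears because $A$ sits in homological degree one higher than $F$ in the long exact sequence. A minor edge case is $G''$ being the empty graph (when $x$ dominates, i.e. $N_G[x]=V(G)$), in which case $I(G'')^{\vee}$ should be interpreted as the unit ideal $R$; then $\pd=0$ and $\reg=0$ and the stated bounds still hold. Everything else is a routine invocation of the long exact sequences in $\Tor$ and in local cohomology (or, equivalently, the mapping-cone construction on minimal free resolutions).
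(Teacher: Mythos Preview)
Your proof is correct and follows essentially the same approach as the paper: apply the short exact sequence of Lemma~\ref{exact}(iii), invoke the standard bounds on $\pd$ and $\reg$ along a short exact sequence (the paper cites \cite[Corollary 20.19]{E}), and then use that multiplication by a monomial $f$ shifts regularity by $\deg(f)$ and leaves projective dimension unchanged. Your presentation is slightly more detailed in tracking the shifts and in noting the edge case $I(G'')^{\vee}=R$, but the argument is the same.
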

\begin{proof}
Considering the short exact sequence $$0\rightarrow xy_1\cdots
y_tI(G'')^{\vee}\rightarrow xI(G')^{\vee}\oplus y_1\cdots
y_tI(G'')^{\vee}\rightarrow I(G)^{\vee}\rightarrow 0$$
insures that
$$\T{pd}(I(G)^{\vee})\leq \max \{\T{pd}(xI(G')^{\vee}),\T{pd}(y_1\cdots
y_tI(G'')^{\vee}),\T{pd}(xy_1\cdots y_tI(G'')^{\vee})+1\}$$ and
$$\T{reg}(I(G)^{\vee})\leq \max \{\T{reg}(xy_1\cdots
y_tI(G'')^{\vee})-1,\T{reg}(xI(G')^{\vee}),\T{reg}( y_1\cdots
y_tI(G'')^{\vee})\}$$ (see \cite[Corollary 20.19]{E}).

On the other hand, we know that for any monomial ideal $I$ and
monomial $f$ with the property that the support of $f$ is disjoint
from the support of any generators of $I$, we have
$\T{pd}(fI)=\T{pd}(I)$ and $\T{reg}(fI)=\T{reg}(I)+\deg(f)$. This
fact completes the proof.
\end{proof}

The following lemma is needed in the sequel frequently.
\begin{lem}\label{shedding}
Assume that $G$ is a $C_5$-free graph and $x$
is a shedding vertex of $G$. Then there is a vertex $y$ of $N(x)$
such that $N[y]\subseteq N[x]$.
\end{lem}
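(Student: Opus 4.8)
The plan is to unwind the definition of shedding vertex directly and extract the vertex $y$ combinatorially, using the $C_5$-freeness only to rule out the one obstruction that can occur. First I would recall what it means for $x$ to be a shedding vertex: by condition (ii) of Definition \ref{1.1}, every independent set of $G\setminus N[x]$ fails to be a maximal independent set of $G\setminus\{x\}$. Equivalently, for every independent set $S$ of $G''=G\setminus N[x]$, there is a vertex of $G'=G\setminus\{x\}$ outside $S$ that is not adjacent to any vertex of $S$; since $S$ already saturates $G''$, such a vertex must lie in $N(x)=\{y_1,\dots,y_t\}$. So the shedding condition says: for every independent set $S$ of $G''$, some $y_i\in N(x)$ has $N_G(y_i)\cap S=\emptyset$.

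**Choosing $y$ and deriving a contradiction.** Suppose, for contradiction, that no $y\in N(x)$ satisfies $N[y]\subseteq N[x]$; that is, every $y_i$ has a neighbor in $V(G)\setminus N[x]=V(G'')$. For each $i$ pick $z_i\in N(y_i)\cap V(G'')$. Now I would try to assemble an independent set $S$ of $G''$ that "blocks" every $y_i$, i.e. such that every $y_i$ has a neighbor in $S$ — this would contradict the shedding property. The naive candidate is $\{z_1,\dots,z_t\}$, but this need not be independent in $G''$. The standard fix is to take $S$ to be a maximal independent set of $G''$ containing... well, here is the subtlety: we want $S$ to dominate each $y_i$. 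Build $S$ greedily: process $y_1,\dots,y_t$ in order; maintain an independent set $S$ of $G''$; at step $i$, if $y_i$ already has a neighbor in $S$ do nothing, otherwise we want to add a neighbor of $y_i$ lying in $V(G'')$ to $S$ while keeping independence. The obstruction to adding $z_i$ is that $z_i$ may be adjacent to some already-chosen $z_j\in S$. This is exactly where $C_5$-freeness enters.

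**Where $C_5$-freeness is used — the main obstacle.** The heart of the argument is to show the greedy construction cannot get stuck, and this is the step I expect to be the real obstacle. Suppose at step $i$ every neighbor of $y_i$ in $V(G'')$ is adjacent to some vertex already in $S$. In particular $z_i$ is adjacent to some $z_j\in S$ (with $j<i$), where $z_j\in N(y_j)\cap V(G'')$. Then consider the five vertices $x, y_i, z_i, z_j, y_j$. We have edges $\{x,y_i\}$, $\{y_i,z_i\}$, $\{z_i,z_j\}$, $\{z_j,y_j\}$, $\{y_j,x\}$ — a closed walk of length $5$. To get a genuine induced $C_5$ (contradicting $C_5$-freeness) I must check the five vertices are distinct and that the three "chord" pairs $\{x,z_i\},\{x,z_j\},\{y_i,y_j\}$, and $\{y_i, z_j\}$, $\{z_i, y_j\}$ are non-edges. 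Distinctness: $z_i,z_j\in V(G'')$ while $x\notin V(G'')$ and $y_i,y_j\in N(x)$ so $y_i,y_j\notin V(G'')$; and $z_i\ne z_j$ since $z_i\notin S$ but $z_j\in S$; one must still argue $y_i\ne y_j$, which is immediate as they are distinct neighbors by indexing. The chords $\{x,z_i\},\{x,z_j\}$ are non-edges because $z_i,z_j\in V(G'')=V(G)\setminus N[x]$. The chord $\{y_i,z_j\}$ (resp. $\{z_i,y_j\}$): if $y_i$ were adjacent to $z_j\in S$ then $y_i$ would already have a neighbor in $S$ and step $i$ would have done nothing — contradiction; so that chord is absent. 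For $\{z_i,y_j\}$ and $\{y_i,y_j\}$ a little more care is needed — one may have to choose the $z$'s or order the $y$'s more cleverly (e.g., if $z_i\sim y_j$ we could have used $z_i$ itself as a witness that $y_j$ is blocked, and re-route), and I would phrase the greedy step to choose, among all unblocked $y$'s, one whose obstructions are minimal, so that any such $5$-cycle can be reduced to an induced one. Once an induced $C_5$ is produced we contradict the hypothesis, so the greedy process terminates with an independent set $S$ of $G''$ dominating all of $N(x)$; extending $S$ to a maximal independent set of $G''$ (adding only vertices of $V(G'')$, which cannot be adjacent to $x$) preserves this, and this final $S$ violates the shedding property of $x$. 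Hence some $y\in N(x)$ has $N[y]\subseteq N[x]$, as claimed.
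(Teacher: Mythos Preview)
Your overall plan---assume every $y_i\in N(x)$ has a neighbour $w_i$ in $V(G'')$, assemble from these an independent set in $G''$ whose maximal extension is also maximal in $G'$, and thereby contradict the shedding property---is exactly the paper's plan. The divergence is entirely in what ``$C_5$-free'' means and, correspondingly, in how much work is needed at the key step.

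The paper reads ``$C_5$-free'' as \emph{containing no $5$-cycle as a subgraph} (not merely no induced $C_5$); its proof simply exhibits the $5$-cycle $x\,y_i\,w_i\,w_j\,y_j\,x$ whenever $w_i\sim w_j$, without checking chords. Under that reading there is no need for your greedy construction at all: pick one $w_i\in N(y_i)\cap V(G'')$ for each $i$; any edge $w_iw_j$ yields a $5$-cycle, so $\{w_1,\dots,w_t\}$ is independent in $G''$; extend it to a maximal independent set $F$ of $G''$; then $F$ is already maximal in $G'$ since each $y_i$ has the neighbour $w_i\in F$. That is the whole argument.

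You instead work under the standard ``no induced $C_5$'' reading, and this is where the genuine gap lies: the two chords you flagged as needing ``a little more care'' --- $\{y_i,y_j\}$ and $\{z_i,y_j\}$ --- cannot be ruled out, and in fact the lemma is \emph{false} under that reading. Take $V=\{x,y_1,y_2,w_1,w_2\}$ with edges $xy_1,\,xy_2,\,y_1y_2,\,y_1w_1,\,w_1w_2,\,w_2y_2$. The only $5$-cycle has the chord $y_1y_2$, so there is no induced $C_5$; the independent sets of $G\setminus N[x]$ are $\emptyset,\{w_1\},\{w_2\}$, and each is properly extendable in $G\setminus\{x\}$ (by $y_2$, resp.\ $y_1$), so $x$ is a shedding vertex; yet $w_1\in N[y_1]\setminus N[x]$ and $w_2\in N[y_2]\setminus N[x]$. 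So no amount of ``re-routing'' in your greedy process can close the gap. The fix is to adopt the paper's (stronger) hypothesis, after which your argument collapses to the three-line version above.
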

\begin{proof}
Let $x$ be a shedding vertex of a $C_5$-free graph $G$ and $N(x)=\{y_1, \dots, y_t\}$. Suppose, in contrary,
that for each $1\leq i \leq t$, there exists a vertex $w_i$ in
$N(y_i)\cap (V(G)\setminus N[x])$. Now, if $w_i$ is adjacent to $w_j$
for some $1\leq i, j\leq t$ with $i\neq j$, then
$x-y_i-w_i-w_j-y_j-x$ is a $C_5$-subgraph of $G$ and so it is a
contradiction. Hence, $\{w_1, \dots, w_t\}$ is an independent set
in $G\setminus N[x]$. Now, if $F$ is a maximal independent set in $G\setminus N[x]$ containing $\{w_1, \dots, w_t\}$, it is also a a maximal independent set in
$G\setminus \{x\}$. This contradicts with our assumption that $x$ is a shedding vertex and so completes our proof.
\end{proof}

Now we are ready to describe the regularity of $R/I(G)$ for a $C_5$-free
vertex decomposable graph $G$.

\begin{thm}\label{sh}
Let $G$ be a $C_5$-free vertex decomposable graph. Then
$$\T{reg}(R/I(G))=c_G.$$
\end{thm}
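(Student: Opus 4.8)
The inequality $\reg(R/I(G))\ge c_G$ is Theorem~\ref{1.6}(i), so the plan is to prove the reverse inequality $\reg(R/I(G))\le c_G$ by induction on $|V(G)|$. If $G$ is totally disconnected, then $I(G)=0$ and both sides are $0$. Otherwise, since $G$ is vertex decomposable and not totally disconnected, it has a shedding vertex $x$ for which $G':=G\setminus\{x\}$ and $G'':=G\setminus N_G[x]$ are both vertex decomposable; being induced subgraphs of the $C_5$-free graph $G$, they are again $C_5$-free, and each has strictly fewer vertices, so by the induction hypothesis $\reg(R/I(G'))=c_{G'}$ and $\reg(R/I(G''))=c_{G''}$.

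The second ingredient is to combine the short exact sequence of Lemma~\ref{exact}(iii) with Alexander duality. By Terai's theorem (Theorem~\ref{1.3}), $\reg(R/I(H))=\pd(I(H)^{\vee})$ for $H\in\{G,G',G''\}$, and Corollary~\ref{ineq}(i) bounds $\pd(I(G)^{\vee})$ in terms of $\pd(I(G')^{\vee})$ and $\pd(I(G'')^{\vee})$; chaining these equalities and inequalities yields
$$\reg(R/I(G))=\pd(I(G)^{\vee})\le\max\{\pd(I(G')^{\vee}),\ \pd(I(G'')^{\vee})+1\}=\max\{\reg(R/I(G')),\ \reg(R/I(G''))+1\}=\max\{c_{G'},\ c_{G''}+1\}.$$
It therefore remains to show $c_{G'}\le c_G$ and $c_{G''}+1\le c_G$. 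The former holds because $G'$ is an induced subgraph of $G$ and a pairwise $3$-disjoint set of edges in an induced subgraph of $G$ stays pairwise $3$-disjoint in $G$ (the relevant induced subgraphs on four vertices coincide).

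The heart of the matter is the inequality $c_{G''}+1\le c_G$, and this is exactly where $C_5$-freeness is used, via Lemma~\ref{shedding}: it produces a vertex $y\in N_G(x)$ with $N_G[y]\subseteq N_G[x]$, so $y$ has no neighbour in $V(G'')=V(G)\setminus N_G[x]$, and neither does $x$. Hence, if $A$ is a maximum pairwise $3$-disjoint set of edges of $G''$, then for every $\{a,b\}\in A$ the four vertices $x,y,a,b$ are distinct and the induced subgraph of $G$ on $\{x,y,a,b\}$ consists precisely of the two disjoint edges $\{x,y\}$ and $\{a,b\}$; thus $A\cup\{\{x,y\}\}$ is a pairwise $3$-disjoint set of edges of $G$ and $c_G\ge|A|+1=c_{G''}+1$. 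Together with the previous paragraph this closes the induction. I expect the only delicate points to be verifying the hypothesis of Lemma~\ref{shedding} and disposing of the degenerate cases (for example $V(G'')=\emptyset$, where one reads $I(G'')^{\vee}=R$ and $c_{G''}=0$); everything else is bookkeeping.
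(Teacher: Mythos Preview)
Your proposal is correct and follows essentially the same approach as the paper: induction on $|V(G)|$, the bound from Corollary~\ref{ineq}(i) translated via Theorem~\ref{1.3}, and Lemma~\ref{shedding} to obtain the edge $\{x,y\}$ that witnesses $c_{G''}+1\le c_G$. If anything, you spell out more carefully why $\{x,y\}$ is $3$-disjoint from every edge of $G''$ and flag the degenerate case $V(G'')=\emptyset$, which the paper leaves implicit.
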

\begin{proof}
In view of Theorems \ref{1.6}(i) and \ref{1.3} it is enough to show that $\T{pd}(I(G)^{\vee})\leq c_G$.
We proceed by induction on $|V(G)|$. For $|V(G)|=2$, $G$ is totally disconnected or a single edge. Hence, $I(G)^\vee=0$ or $I(G)^\vee=\langle x,y \rangle$. Therefore, $\T{pd}(I(G)^\vee )=0\leq 0=c_G$ or $\T{pd}(I(G)^\vee )=1\leq 1=c_G$. Suppose that $G$ is a $C_5$-free vertex decomposable graph with $|V(G)|>2$ and the result holds for each $C_5$-free vertex decomposable graph $H$ with smaller values of $|V(H)|$. Since $G$ is vertex decomposable, there exists a shedding vertex $x\in V(G)$ such that $G'=G\setminus \{x\}$ and $G''=G\setminus N_G[x]$ are vertex decomposable. Let $N_G(x)=\{y_1, \dots, y_t\}$. By Corollary \ref{ineq}(i), we have
 $$\T{pd}(I(G)^{\vee})\leq\max\{\T{pd}(I(G')^{\vee}),\T{pd}(I(G'')^{\vee})+1\}.$$
 Clearly $G'$ and $G''$ are $C_5$-free and vertex decomposable. So, by induction
hypothesis we have $\T{pd}(I(G')^{\vee})\leq c_{G'}$ and
$\T{pd}(I(G'')^{\vee})\leq c_{G''}$. Hence,
$$\T{pd}(I(G)^{\vee})\leq\max\{c_{G'},c_{G''}+1\}.$$
Now, since $c_{G'}\leq c_G$, it is enough to show that $c_{G''}+1\leq c_G$.

By Lemma \ref{shedding} there is
 a vertex $y$ such that $N[y]\subseteq N[x]$, thus we can add the edge $\{x,y\}$ to any set of pairwise $3$-disjoint edges in $G''$ and get a
pairwise $3$-disjoint set of edges in $G$, which proves the inequality $c_{G''}+1\leq c_G$.
\end{proof}

As a corollary we can recover  results of Zheng \cite{Zheng} and Van Tuyl \cite{VT} as follows.
\begin{cor}\label{cor1}
The following hold.
\begin{itemize}
\item[(i)] \cite [Theorem 3.3]{VT}  Let $G$ be a sequentially Cohen-Macaulay bipartite graph. Then $\T{reg}(R/I(G))=c_G$.
\item[(ii)] \cite [Theorem 2.18]{Zheng} Let $G$ be a forest. Then $\T{reg}(R/I(G))=
c_G$.
\end{itemize}

\end{cor}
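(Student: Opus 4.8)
The plan is to derive both parts of Corollary \ref{cor1} directly from Theorem \ref{sh} by verifying that each of the two graph classes in question consists of $C_5$-free vertex decomposable graphs. Since Theorem \ref{sh} already establishes $\reg(R/I(G))=c_G$ for every $C_5$-free vertex decomposable $G$, the work reduces entirely to membership checks, and the corollary follows immediately once those are in place.

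For part (i), let $G$ be a sequentially Cohen-Macaulay bipartite graph. First I would observe that a bipartite graph cannot contain an induced $5$-cycle, since $C_5$ is an odd cycle and bipartite graphs have no odd cycles at all (induced or otherwise); hence $G$ is automatically $C_5$-free. Next, by \cite[Theorem 2.10]{VT} (cited in the paragraph opening this section), a bipartite graph is sequentially Cohen-Macaulay if and only if it is vertex decomposable, so $G$ is vertex decomposable. Thus $G$ satisfies the hypotheses of Theorem \ref{sh}, and we conclude $\reg(R/I(G))=c_G$.

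For part (ii), let $G$ be a forest. A forest has no cycles whatsoever, so in particular it is $C_5$-free. Moreover, by \cite[Theorem 3.2]{FV} (again cited at the start of this section) every forest is vertex decomposable. Therefore Theorem \ref{sh} applies and gives $\reg(R/I(G))=c_G$, which is precisely \cite[Theorem 2.18]{Zheng}.

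I do not anticipate any genuine obstacle here: both statements are pure corollaries obtained by checking that the hypotheses of Theorem \ref{sh} hold, and the two needed facts — that sequentially Cohen-Macaulay bipartite graphs and forests are vertex decomposable — are exactly the references invoked at the opening of Section 2. The only point requiring a word of care is the $C_5$-freeness, but this is trivial in both cases (no odd cycles in a bipartite graph, no cycles at all in a forest), so the proof will be short.

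\begin{proof}
Both statements follow from Theorem \ref{sh} once we check that the graphs in question are $C_5$-free and vertex decomposable.

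$(i)$ Let $G$ be a sequentially Cohen-Macaulay bipartite graph. Since $G$ is bipartite, it contains no odd cycle, and in particular no induced $C_5$; thus $G$ is $C_5$-free. By \cite[Theorem 2.10]{VT}, a bipartite graph is sequentially Cohen-Macaulay if and only if it is vertex decomposable, so $G$ is vertex decomposable. Hence Theorem \ref{sh} yields $\T{reg}(R/I(G))=c_G$.

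$(ii)$ Let $G$ be a forest. Then $G$ has no cycles, so it is $C_5$-free, and by \cite[Theorem 3.2]{FV} it is vertex decomposable. Again Theorem \ref{sh} gives $\T{reg}(R/I(G))=c_G$.
\end{proof}
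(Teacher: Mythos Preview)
Your proof is correct and follows essentially the same approach as the paper: check $C_5$-freeness and vertex decomposability, then apply Theorem \ref{sh}. The only difference is in part (ii), where the paper argues forest $\Rightarrow$ chordal $\Rightarrow$ vertex decomposable via \cite[Corollary 7]{W}, whereas you cite \cite[Theorem 3.2]{FV}; be aware that the latter reference establishes sequential Cohen--Macaulayness rather than vertex decomposability directly, so the paper's citation is the more accurate one for this step.
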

 \begin{proof}
 $(i)$ By \cite [Theorem 2.10]{VT}, $G$ is vertex decomposable. Since a bipartite graph is $C_5$-free, Theorem \ref{sh} yields the result.

$(ii)$ Since any forest is a chordal graph, it is vertex decomposable by \cite[Corollary 7]{W}. Clearly $G$ is
also $C_5$-free. So we can apply Theorem \ref{sh} to get the result.
\end{proof}

The following result presents an upper bound for the projective dimension of the ring $R/I(G)$. As we shall see later, this is a technical tool for characterizing the projective dimension of the ring $R/I(G)$.

\begin{prop}\label{pd}
Let $G$ be a $C_5$-free vertex decomposable graph. Then
$$\T{pd}(R/I(G))\leq d'_G.$$
\end{prop}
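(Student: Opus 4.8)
The plan is to mirror the inductive strategy of Theorem \ref{sh}, but now working with projective dimension directly (via the duality $\T{pd}(R/I(G)) = \T{reg}(I(G)^\vee)$, which follows from Theorem \ref{1.3} applied to $I(G)^\vee$, together with the observation that $(I(G)^\vee)^\vee = I(G)$) — or, more cleanly, by estimating $\T{reg}(I(G)^\vee)$ using the bound in Corollary \ref{ineq}(ii). First I would reduce to showing $\T{reg}(I(G)^\vee) \le d'_G$ and set up induction on $|V(G)|$, with the base case $|V(G)| \le 2$ immediate since then $I(G)^\vee$ is $0$ or $\langle x, y\rangle$, forcing $\T{reg}(I(G)^\vee) \in \{0, 1\}$ while $d'_G$ is $0$ or $1$ correspondingly.

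For the inductive step, let $x$ be a shedding vertex with $G' = G \setminus \{x\}$ and $G'' = G \setminus N_G[x]$ both $C_5$-free and vertex decomposable, and write $t = |N_G(x)|$. Corollary \ref{ineq}(ii) gives
$$\T{reg}(I(G)^\vee) \le \max\{\T{reg}(I(G')^\vee) + 1,\ \T{reg}(I(G'')^\vee) + t\},$$
so by the induction hypothesis it suffices to prove the two inequalities $d'_{G'} + 1 \le d'_G$ and $d'_{G''} + t \le d'_G$. The first is the easier one: given a semi-strongly disjoint set of bouquets $\mathcal{B}$ of $G'$ with $|F(\mathcal{B})| = d'_{G'}$, I would use Lemma \ref{shedding} to find a vertex $y \in N_G(x)$ with $N_G[y] \subseteq N_G[x]$; since $N_G[y] \subseteq N_G[x]$ one checks that $\{x, y\}$ is an edge whose root $x$ can be adjoined without destroying independence of the root set — here one has to be a little careful that $x$ is not already used and that adding the bouquet $\{\{x,y\}\}$ keeps the vertex sets disjoint, which may require first discarding any bouquet of $\mathcal{B}$ meeting $\{x, y\}$; making this bookkeeping precise is the main technical point.

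The second inequality, $d'_{G''} + t \le d'_G$, is where I expect the real obstacle. Start from a semi-strongly disjoint set of bouquets $\mathcal{B}''$ of $G''$ with $|F(\mathcal{B}'')| = d'_{G''}$; note $V(\mathcal{B}'')$ avoids $N_G[x]$. The idea is to adjoin a single new bouquet $B$ rooted at $x$ whose flowers are all $t$ neighbours $y_1, \dots, y_t$ of $x$, contributing $t$ new flowers. Then $F(\mathcal{B}'' \cup \{B\}) = F(\mathcal{B}'') \cup \{y_1, \dots, y_t\}$ has size $d'_{G''} + t$, and $V(B) = \{x, y_1, \dots, y_t\} = N_G[x]$ is disjoint from $V(\mathcal{B}'')$. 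The one thing to verify is condition (ii) of semi-strong disjointness: the new root set is $R(\mathcal{B}'') \cup \{x\}$, and since $R(\mathcal{B}'')$ lies in $V(G'') = V(G) \setminus N_G[x]$, no element of it is adjacent to $x$, and $R(\mathcal{B}'')$ was already independent; hence $R(\mathcal{B}'') \cup \{x\}$ is independent in $G$. This shows $\mathcal{B}'' \cup \{B\}$ is semi-strongly disjoint in $G$, giving $d'_G \ge d'_{G''} + t$ and completing the induction. The subtlety to watch throughout is that the $C_5$-freeness is used only through Lemma \ref{shedding} (for the $d'_{G'}+1$ bound), while the $d'_{G''}+t$ bound is purely combinatorial and needs no extra hypothesis.
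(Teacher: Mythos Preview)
Your overall strategy is exactly the paper's: reduce to $\T{reg}(I(G)^\vee)\le d'_G$ via Theorem \ref{1.3}, induct on $|V(G)|$, apply Corollary \ref{ineq}(ii), and then prove the two combinatorial inequalities $d'_{G''}+t\le d'_G$ and $d'_{G'}+1\le d'_G$. Your treatment of $d'_{G''}+t\le d'_G$ is complete and matches the paper verbatim.

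The gap is in $d'_{G'}+1\le d'_G$, which you call ``the easier one'' but which is in fact the only place where real care (and the $C_5$-free hypothesis) is needed. Two things go wrong with adjoining a new bouquet with root $x$ and flower $y$. First, independence of the root set $R(\mathcal{B})\cup\{x\}$ is not automatic: if some $y_i\in R(\mathcal{B})$ then $x$ is adjacent to a root, and Lemma \ref{shedding} gives you no control over this. Second, your proposed remedy of ``discarding any bouquet of $\mathcal{B}$ meeting $\{x,y\}$'' can throw away arbitrarily many flowers, so it does not yield $d'_{G'}+1$.

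The paper's fix is a case split that you should adopt. \textbf{Case I:} if some $y_i\in R(\mathcal{B})$, do not create a new bouquet; instead add the stem $\{y_i,x\}$ to the existing bouquet rooted at $y_i$, making $x$ a new flower. The root set is unchanged and $x\notin V(G')$ guarantees vertex-disjointness. \textbf{Case II:} if $R(\mathcal{B})\cap\{y_1,\dots,y_t\}=\emptyset$, then $R(\mathcal{B})\cup\{x\}$ is independent, and now Lemma \ref{shedding} is used: pick $y_i$ with $N[y_i]\subseteq N[x]$. This $y_i$ cannot be a flower either, since its root would then lie in $N(y_i)\subseteq N[x]\setminus\{x\}=\{y_1,\dots,y_t\}$, contradicting the Case II assumption. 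Hence $y_i\notin R(\mathcal{B})\cup F(\mathcal{B})$, and the new single-stem bouquet $\{x,y_i\}$ can be added cleanly. In both cases $|F(\mathcal{B}')|=d'_{G'}+1$, as required.
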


\begin{proof}
From the equalities $I(G)=(I(G)^{\vee})^{ \vee}$, $\T{pd}(R/I(G))=\T{pd}(I(G))+1$ and $\T{reg}(I(G))=\T{reg}(R/I(G))+1$ and
Theorem \ref{1.3}, one can see that $\T{pd}(R/I(G))=\T{reg}(I(G)^\vee)$. So, it is enough to show that $\T{reg}(I(G)^{\vee})\leq
d'_G$. We prove the
assertion by induction on $|V(G)|$ . For $|V(G)|=2$, $G$ is totally disconnected or a single edge. Hence, $\T{reg}(I(G)^\vee)= 0\leq 0=d'_G$ or $\T{reg}(I(G)^\vee)=1\leq 1=d'_G$. Now, suppose inductively that $G$ is a $C_5$-free vertex decomposable graph with $|V(G)|>2$ and the result holds for smaller values of $|V(G)|$. Assume that $x\in V(G)$ is a shedding vertex of $G$ such that $G'=G\setminus \{x\}$ and $G''=G\setminus N_G[x]$ are vertex decomposable and $N_G(x)=\{y_1,\dots,y_t\}$. In view of Corollary \ref{ineq}(ii), we have
$$\T{reg}(I(G)^{\vee})\leq \max \{\T{reg}(I(G')^{\vee})+1,\T{reg}(I(G'')^{\vee})+t\}.$$
By induction hypothesis $\T{reg}(I(G')^{\vee})\leq d'_{G'}$ and
$\T{reg}(I(G'')^{\vee})\leq d'_{G''}$.
Thus
\begin{equation}\label{1}
\T{reg}(I(G)^{\vee})\leq \max \{d'_{G'}+1,d'_{G''}+t\}.
\end{equation}
Now, let $\mathcal{B}=\{B_1,\ldots,B_n\}$ be a semi-strongly disjoint set
of bouquets in $G''$ with $d'_{G''}=|F(\mathcal{B})|$. Then by adding the bouquet with root
$x$ and flowers $\{y_1,\ldots,y_t\}$ to $\mathcal{B}$, we have a semi-strongly disjoint set
of bouquets $\mathcal{B'}$ in $G$ with $|F(\mathcal{B'})|=d'_{G''}+t$. Therefore
\begin{equation}\label{2}
d'_{G''}+t\leq d'_G.
\end{equation}
Now let $\mathcal{B}=\{B_1,\ldots,B_n\}$ be a semi-strongly disjoint set
of bouquets in $G'$ with $d'_{G'}=|F(\mathcal{B})|$. We consider the following two cases.

\textbf{Case I.} If $y_i\in R(\mathcal{B})$ for some $1\leq i\leq t$, then by adding the stem $\{x,y_i\}$
to the bouquet with root $y_i$, $G$ has a semi-strongly disjoint set
of bouquets $\mathcal{B'}$ with $|F(\mathcal{B'})|=d'_{G'}+1$.

\textbf{Case II.} If $R(\mathcal{B})\cap \{y_1,\ldots,y_t\}=\emptyset$, then by Lemma \ref{shedding},
there exists $1\leq i\leq t$, such that $N[y_i]\subseteq N[x]$.
So $y_i\notin R(\mathcal{B})\cup F(\mathcal{B})$. Thus, adding the bouquet with a single stem $\{x,y_i\}$ to $\mathcal{B}$, induces a semi-strongly disjoint set
of bouquets $\mathcal{B'}$ of $G$ with $|F(\mathcal{B'})|=d'_{G'}+1$.

Therefore, in each case we have
\begin{equation}\label{3}
d'_{G'}+1\leq d'_G.
\end{equation}
Now (\ref{1}), (\ref{2}) and (\ref{3}) implies the result.
\end{proof}

Dao and Schweig in \cite{Dao+Schweig} introduce a new graph domination parameter called edgewise domination. Let $F\subseteq E(G)$. We say that $F$ is edgewise dominant if any $v\in V(G)$ is adjacent to an endpoint of some edge $e\in F$. They also define
$$\epsilon (G):=\min\{|F| \ | \ F\subseteq E(G) \ \text{is edgewise dominant}\}.$$
Moreover, recall that $S
\subseteq V(G)$ is called a dominating set of $G$ if each vertex in $V(G)\setminus S$ is adjacent to some vertex in $S$. Also,
$$\gamma (G)=\min\{|A| \ | \ A \ \text{is a dominating set of} \ G\}$$
is another graph domination parameter.

The following proposition declares when a semi-strongly disjoint set of bouquets of a graph
corresponds to a minimal vertex cover. The argument is the same as in \cite[Corollary 5.6]{Kimura}.

\begin{prop}\label{cover}
Let $G$ be a graph and $\mathcal{B}=\{B_1,\ldots,B_n\}$ be a semi-strongly disjoint set of bouquets in $G$
with $|F(\mathcal{B})|=d'_G$. Then
\begin{itemize}
\item[(i)] $F(\mathcal{B})$ is a minimal vertex cover of $G$.
\item[(ii)] If moreover, $G$ has no isolated vertex, we have $F(\mathcal{B})$ is a dominating set of $G$ and $S(\mathcal{B})$ is edgewise dominant in $G$.
\end{itemize}
\end{prop}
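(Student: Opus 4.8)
The plan is to first establish part (i), that $F(\mathcal{B})$ is a minimal vertex cover, and then derive part (ii) from it together with the structure of the bouquets. For (i), the key observation is a counting/maximality argument. Recall that $I(G)^\vee$ is generated by the monomials $x^C$ over minimal vertex covers $C$, and that $\T{pd}(R/I(G)) = \T{reg}(I(G)^\vee) \geq \T{bight}(I(G))$ is not quite what we want directly; instead I would argue combinatorially. Since the bouquets $B_1,\ldots,B_n$ are pairwise vertex-disjoint and $R(\mathcal{B})$ is independent, I claim $F(\mathcal{B})$ is a vertex cover: if $e=\{u,v\}$ were an edge with $u,v\notin F(\mathcal{B})$, then one could enlarge $\mathcal{B}$ — either by creating a new bouquet on $e$ (if both endpoints avoid $R(\mathcal{B})$ as well, or after adjusting roots) or by attaching $e$ as a new stem/flower to an existing bouquet whose root is $u$ or $v$ — producing a semi-strongly disjoint set with strictly more flowers, contradicting $|F(\mathcal{B})| = d'_G$. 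One must be slightly careful: if $u\in R(\mathcal{B})$, say $u$ is the root of $B_i$, then $v\notin F(\mathcal{B})$ and $v\notin R(\mathcal{B})$ (the latter since $R(\mathcal{B})$ is independent and $u\in R(\mathcal{B})$ with $\{u,v\}\in E(G)$), so $v$ is a fresh vertex and $\{u,v\}$ may be added as a new stem of $B_i$, increasing $|F(\mathcal{B})|$ by one. If neither endpoint lies in $R(\mathcal{B})$, we may simply adjoin the new bouquet with root $u$ and single flower $v$; independence of the enlarged root set must be checked, but since $u$ is not adjacent to any existing root (else $u$ would need to be in... — actually this needs care, so the honest move is: if $u$ is adjacent to some existing root, fall into the previous case with roles swapped). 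In all cases we reach a contradiction, so $F(\mathcal{B})$ is a vertex cover.

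For minimality, I would show that removing any single flower $w$ of $F(\mathcal{B})$ destroys the covering property. Say $w$ is a flower of $B_i$ with root $z_i$; the stem $\{z_i,w\}$ is then an edge not covered by $F(\mathcal{B})\setminus\{w\}$, since $z_i\in R(\mathcal{B})$ and $R(\mathcal{B})\cap F(\mathcal{B})=\emptyset$ (roots are never flowers, as $V(B_i)\cap V(B_j)=\emptyset$ and within a bouquet the root differs from its flowers; here one uses that a bouquet has at least one stem). Hence $F(\mathcal{B})\setminus\{w\}$ is not a vertex cover, giving minimality. This mirrors exactly the argument of \cite[Corollary 5.6]{Kimura}, as the statement notes.

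For part (ii), assume $G$ has no isolated vertex. That $F(\mathcal{B})$ is a dominating set: every vertex $v\in V(G)\setminus F(\mathcal{B})$ lies on some edge $e$ (no isolated vertices), $e$ is covered by $F(\mathcal{B})$ by part (i), so the other endpoint of $e$ is in $F(\mathcal{B})$ and is adjacent to $v$; vertices in $F(\mathcal{B})$ itself need not be dominated for the definition of dominating set as stated (only vertices outside $S$). That $S(\mathcal{B})$ is edgewise dominant: we must show every $v\in V(G)$ is adjacent to an endpoint of some stem $e\in S(\mathcal{B})$. The endpoints of stems are precisely $R(\mathcal{B})\cup F(\mathcal{B})$. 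If $v\in F(\mathcal{B})$, then $v$ is a flower, hence adjacent to its root, which is a stem-endpoint. If $v\notin F(\mathcal{B})$, then by the domination argument $v$ is adjacent to some $w\in F(\mathcal{B})$, and $w$ is a stem-endpoint. Either way $v$ is adjacent to an endpoint of a stem, so $S(\mathcal{B})$ is edgewise dominant.

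The main obstacle I anticipate is the case analysis in the vertex-cover step of part (i): one must handle carefully the interplay between the two defining conditions of semi-strong disjointness — vertex-disjointness of the $V(B_i)$ and independence of $R(\mathcal{B})$ — to be sure that the enlarged set of bouquets is still semi-strongly disjoint, in particular that its set of roots remains independent when a new bouquet is created. Everything else is bookkeeping about roots never coinciding with flowers and the no-isolated-vertex hypothesis being used exactly where needed.
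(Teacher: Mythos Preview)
Your proposal is correct and follows essentially the same approach as the paper: a maximality/contradiction argument to show $F(\mathcal{B})$ is a vertex cover (enlarging $\mathcal{B}$ by either attaching a new stem to an existing root or creating a new single-stem bouquet), the uncovered-stem observation for minimality, and then deducing both domination statements in part (ii) from part (i) together with the no-isolated-vertex hypothesis. The only cosmetic difference is the order of the case analysis in part (i): the paper first rules out that either endpoint of the uncovered edge is adjacent to a root (else add a stem there), then concludes neither endpoint can itself be a root, and finally adjoins a new bouquet; your version handles ``an endpoint is a root'' first and folds the adjacency-to-a-root issue into the second case, which is equivalent once you interpret ``fall into the previous case'' as ``add $\{u,z\}$ as a new stem at the existing root $z$'' rather than literally swapping $u$ and $v$.
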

\begin{proof}
(i) First we show that the set $F(\mathcal{B})$
is a vertex cover of $G$. Assume, in contrary, that $\{x,y\}$ is an edge which is not covered
by $F(\mathcal{B})$. Then $x,y\notin F(\mathcal{B})$.
Moreover $x$ and $y$ are not adjacent to any vertex in $R(\mathcal{B})$, otherwise
if $\{x,z\}\in E(G)$ for some $z\in R(\mathcal{B})$, then by adding the stem $\{x,z\}$ to
the bouquet with root $z$, we have a semi-strongly disjoint set
of bouquets $\mathcal{B}'$ with $|F(\mathcal{B}')|=d'_G+1$, which is a contradiction. Moreover $x,y\notin R(\mathcal{B})$, otherwise if  $x$ (respectively
$y$) is a root, then $y$ (respectively $x$) is adjacent to a vertex in $R(\mathcal{B})$, which is not possible by the above argument.
Therefore, if we add the bouquet with a single stem $\{x,y\}$ to the set $\mathcal{B}$,
we have a semi-strongly disjoint set
of bouquets $\mathcal{B}'$ with $|F(\mathcal{B}')|=d'_G+1$, which is again a contradiction.
So $F(\mathcal{B})$
is a vertex cover of $G$ as desired. Moreover, it is a minimal one, since removing any
flower makes the corresponding stem uncovered.

(ii) Let $v\in V(G)$. Since $G$ has no isolated vertex, there exists an edge $e$ containing $v$.
Since $F(\mathcal{B})$ is a vertex cover of $G$, $F(\mathcal{B})\cap e\neq \emptyset$.
This means that there exists an stem  $e'\in S(\mathcal{B})$ such that  $v$ is adjacent to one endpoint of $e'$. Therefore, $S(\mathcal{B})$ is edgewise dominant. Since $G$ has no isolated vertex, any vertex cover is a dominating set. Hence, (i) insures that $F(\mathcal{B})$ is a dominating set of $G$.
\end{proof}

The following corollary provides a chain of inequalities between some algebraic invariants of the edge ideal $I(G)$ and some invariants of $G$.
\begin{cor}\label{bight}
For a graph $G$ we have
$$c_G\leq d_G\leq d'_G\leq \T{bight}(I(G))\leq \T{pd}(R/I(G))\leq |V(G)|-\epsilon(G).$$
If moreover $G$ has no isolated vertex, we have
$$\max\{\epsilon(G), \gamma (G)\}\leq d'_G.$$
\end{cor}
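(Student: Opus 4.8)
The plan is to establish the chain of inequalities in Corollary \ref{bight} piece by piece, assembling results already available in the excerpt. The first three inequalities $c_G\le d_G\le d'_G$ are exactly (\ref{1.1}), recorded earlier, so nothing new is needed there. For the middle inequality $d'_G\le \T{bight}(I(G))$, I would argue as follows: pick a semi-strongly disjoint set of bouquets $\mathcal{B}$ with $|F(\mathcal{B})|=d'_G$. By Proposition \ref{cover}(i), $F(\mathcal{B})$ is a (minimal) vertex cover of $G$, hence $|F(\mathcal{B})|\le \T{bight}(I(G))$ by the description of $\T{bight}(I(G))$ as the maximal size of a minimal vertex cover. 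Note that Proposition \ref{cover} is stated for arbitrary graphs, so no vertex-decomposability hypothesis is needed here — which is good, since Corollary \ref{bight} is stated for a general graph $G$.

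Next, for $\T{bight}(I(G))\le \T{pd}(R/I(G))$: this is a standard fact. A minimal prime $P_C$ of $I(G)$ of maximal height $\T{bight}(I(G))$ localizes to give $\T{pd}_{R_{P_C}}((R/I(G))_{P_C}) \ge \h(P_C) = \T{bight}(I(G))$, since $(R/I(G))_{P_C}$ has a finite free resolution over the regular local ring $R_{P_C}$ of dimension $\T{bight}(I(G))$, and a module of finite projective dimension and depth zero over a local ring has projective dimension equal to that dimension (Auslander–Buchsbaum), while $\depth (R/I(G))_{P_C}=0$ because $P_C R_{P_C}$ is an associated prime. Localization does not increase projective dimension, so $\T{pd}(R/I(G))\ge \T{bight}(I(G))$. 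Alternatively, one may cite $\T{pd}(R/I) \ge \T{bight}(I)$ for any squarefree monomial ideal as folklore (it also follows from $\T{reg}(I^\vee) = \T{pd}(R/I)$ together with the fact that $I^\vee$ has a generator of degree $\T{bight}(I)$). I would present the localization argument briefly.

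For the last inequality $\T{pd}(R/I(G))\le |V(G)|-\epsilon(G)$: here I would invoke the result of Dao and Schweig \cite{Dao+Schweig} (the parameter $\epsilon(G)$ was introduced precisely to bound projective dimension), which states $\T{pd}(R/I(G))\le |V(G)|-\epsilon(G)$; this is cited rather than reproved. Finally, for the addendum under the no-isolated-vertex hypothesis, I would again take $\mathcal{B}$ with $|F(\mathcal{B})|=d'_G$ and apply Proposition \ref{cover}(ii): $F(\mathcal{B})$ is a dominating set, so $\gamma(G)\le |F(\mathcal{B})|=d'_G$; and $S(\mathcal{B})$ is edgewise dominant, and since the bouquets in $\mathcal{B}$ are vertex-disjoint, $|S(\mathcal{B})| = |F(\mathcal{B})| = d'_G$ (each stem contributes a distinct flower), so $\epsilon(G)\le |S(\mathcal{B})|=d'_G$. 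Hence $\max\{\epsilon(G),\gamma(G)\}\le d'_G$. The main obstacle, to the extent there is one, is making sure each cited inequality is applied in the correct generality — in particular that Proposition \ref{cover} and the Dao–Schweig bound genuinely hold for arbitrary $G$, since the surrounding theorems of the paper carry a $C_5$-free vertex-decomposable hypothesis that is not needed for this corollary; I would double-check that no such hypothesis sneaks in.
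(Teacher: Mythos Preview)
Your proposal is correct and follows essentially the same route as the paper: the authors also invoke (\ref{1.1}), Proposition \ref{cover}(i), a citation for $\T{bight}(I(G))\le \T{pd}(R/I(G))$ (they cite \cite[Theorem 3.31]{Vnew} where you give the localization/Auslander--Buchsbaum argument directly), the Dao--Schweig bound \cite[Theorem 4.3]{Dao+Schweig}, and Proposition \ref{cover}(ii) for the final assertion. Your added observation that $|S(\mathcal{B})|=|F(\mathcal{B})|$ is the missing link the paper leaves implicit, and your checks on generality are well placed.
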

\begin{proof}
The first assertion can be gained from the inequality (\ref{1.1}), Proposition \ref{cover} (i), \cite [Theorem 3.31]{Vnew}
and \cite[Theorem 4.3]{Dao+Schweig}.
The second one is an immediate consequence of Proposition \ref{cover} (ii).
\end{proof}

Now, we are ready to bring another main result of this note. This shows that the upper bound determined in Proposition \ref{pd} is tight and also, it is equal to big height of the edge ideal.
\begin{thm}\label{bight1}
Let $G$ be a $C_5$-free vertex decomposable graph. Then
$$\T{pd}(R/I(G))=\T{bight}(I(G))=d'_G.$$
\end{thm}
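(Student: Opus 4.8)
The plan is to combine the three inequalities already available. From Corollary~\ref{bight} we have the general chain $d'_G\leq \T{bight}(I(G))\leq \T{pd}(R/I(G))$ valid for every graph, and from Proposition~\ref{pd} we have $\T{pd}(R/I(G))\leq d'_G$ whenever $G$ is a $C_5$-free vertex decomposable graph. Putting these together forces all the terms in between to coincide, so that $\T{pd}(R/I(G))=\T{bight}(I(G))=d'_G$ at once. Thus no new induction or structural analysis of $G$ is needed here; the work has all been done in the preceding results.

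\smallskip
First I would write down the inequality $\T{pd}(R/I(G))\leq d'_G$ from Proposition~\ref{pd}. Then I would invoke Corollary~\ref{bight}, which gives $d'_G\leq \T{bight}(I(G))\leq \T{pd}(R/I(G))$ for an arbitrary graph $G$ (this part of the corollary rests on Proposition~\ref{cover}(i), which exhibits $F(\mathcal{B})$ as a minimal vertex cover of size $d'_G$, together with the standard fact that $\T{bight}(I(G))\leq \T{pd}(R/I(G))$). Chaining: $d'_G\leq \T{bight}(I(G))\leq \T{pd}(R/I(G))\leq d'_G$, whence equality holds throughout, giving the claimed identities.

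\smallskip
There is essentially no obstacle at this stage: the theorem is a formal consequence of Proposition~\ref{pd} and Corollary~\ref{bight}. All the real difficulty was absorbed into Proposition~\ref{pd}, where one must produce, from a semi-strongly disjoint set of bouquets in $G'$ or $G''$, a correspondingly large such set in $G$ — using Lemma~\ref{shedding} to locate a neighbor $y_i$ of the shedding vertex $x$ with $N[y_i]\subseteq N[x]$ so that the new stem $\{x,y_i\}$ can be attached without destroying the independence of the root set — and into Corollary~\ref{bight}, where $\T{bight}(I(G))\leq\T{pd}(R/I(G))$ is the input from \cite[Theorem 3.31]{Vnew}. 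Here I would simply cite those results and conclude.
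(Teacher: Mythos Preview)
Your proposal is correct and follows exactly the paper's own argument: the theorem is deduced immediately from Proposition~\ref{pd} and Corollary~\ref{bight}, chaining $d'_G\leq \T{bight}(I(G))\leq \T{pd}(R/I(G))\leq d'_G$. Your additional commentary on where the real work lies is accurate but not needed for the proof itself.
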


\begin{proof}
By Proposition \ref{pd} and Corollary \ref{bight} the result holds.
\end{proof}

In \cite{Kimura}, it was proved that for a chordal graph $G$, $\T{pd}(R/I(G))=d_G$ and later with another argument it was shown that for a chordal graph $G$, we moreover have $\T{pd}(R/I(G))=d_G=d'_G$ (see \cite[Theorems 4.1 and 5.3]{Kimura}. Now, the next corollary shows that Theorem 5.3 and Corollary 5.6 in \cite{Kimura} can be directly gained by
Corollary \ref{bight} and \cite[Theorem 4.1]{Kimura}.

\begin{cor}(See \cite[Theorem 5.3 and Corollary 5.6]{Kimura}.)\label{kim}
Let $G$ be a chordal graph. Then
$$\T{pd}(R/I(G))=\T{bight}(I(G))=d_G=d'_G.$$
\end{cor}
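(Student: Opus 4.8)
The plan is to deduce the chain of equalities for a chordal graph $G$ directly from the general inequality chain already established, without re-running any induction. First I would invoke \cite[Theorem 4.1]{Kimura}, which gives $\T{pd}(R/I(G))=d_G$ for every chordal graph $G$. Combining this single fact with the universal inequalities recorded in Corollary \ref{bight}, namely
$$c_G\leq d_G\leq d'_G\leq \T{bight}(I(G))\leq \T{pd}(R/I(G)),$$
I would substitute $\T{pd}(R/I(G))=d_G$ into the last term. This forces every inequality in the segment $d_G\leq d'_G\leq \T{bight}(I(G))\leq \T{pd}(R/I(G))=d_G$ to be an equality, yielding at once $d_G=d'_G=\T{bight}(I(G))=\T{pd}(R/I(G))$, which is exactly the asserted statement.

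The only thing to check is that Corollary \ref{bight} genuinely applies to $G$: but that corollary is stated for an arbitrary graph $G$ (it uses only \eqref{1.1}, Proposition \ref{cover}(i), \cite[Theorem 3.31]{Vnew}, and \cite[Theorem 4.3]{Dao+Schweig}, none of which assume chordality), so there is no obstruction. I should also note that we do not need Theorem \ref{bight1} or the $C_5$-freeness/vertex decomposability hypotheses here — the point of this corollary is precisely that for chordal graphs the conclusion follows from the soft inequalities of Corollary \ref{bight} together with the known identity $\T{pd}(R/I(G))=d_G$ of \cite{Kimura}, giving a streamlined alternative to the original arguments in \cite[Theorems 4.1 and 5.3, Corollary 5.6]{Kimura}.

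I do not anticipate a genuine obstacle: the proof is a one-line squeeze once the right external input is cited. The only mild subtlety worth a sentence is making explicit that the equalities propagate both ways through the chain — the lower bound $d_G\leq d'_G$ and the upper bound $\T{bight}(I(G))\leq \T{pd}(R/I(G))$ together with $\T{pd}(R/I(G))=d_G$ pin all intermediate quantities to the common value $d_G$ — so that the reader sees why no separate verification of $d'_G=\T{bight}(I(G))$ is needed.
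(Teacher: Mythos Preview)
Your proposal is correct and is exactly the approach the paper intends: the sentence preceding Corollary~\ref{kim} says the result ``can be directly gained by Corollary~\ref{bight} and \cite[Theorem~4.1]{Kimura},'' i.e., the same squeeze you describe. There is nothing to add.
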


Looking at many examples of $C_5$-free vertex decomposable graphs (which were not necessarily chordal),
we observed that $d_G=d'_G$. So the following question
comes to mind:

\begin{ques}
Does the equality $d_G=d'_G$ hold for a $C_5$-free vertex decomposable graph?
\end{ques}

In the next, we are interested in characterizing the depth of the ring $R/I(G)$, when $G$ is a $C_5$-free vertex decomposable graph. Recall that a graph $G$ is called unmixed if all maximal independent sets in $G$ have the same cardinality.
\begin{cor}\label{depth}
Let $G$ be a $C_5$-free vertex decomposable graph. Then
$$\T{depth}(R/I(G))=\min \{|F| \ | \ F\subseteq V(G) \ \text{is a maximal independent set in} \ G\}.$$
Moreover,  $R/I(G)$ is Cohen-Macaulay if and only if  $G$ is unmixed.
\end{cor}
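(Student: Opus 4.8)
The plan is to combine Theorem \ref{bight1} with the Auslander--Buchsbaum formula. By Theorem \ref{bight1} we know $\T{pd}(R/I(G)) = \T{bight}(I(G))$, the maximal size of a minimal vertex cover of $G$. Since $R$ is a polynomial ring in $|V(G)|$ variables (a regular local/graded ring of depth $|V(G)|$), the Auslander--Buchsbaum formula gives
$$\T{depth}(R/I(G)) = |V(G)| - \T{pd}(R/I(G)) = |V(G)| - \T{bight}(I(G)).$$
Now I would translate $\T{bight}(I(G))$ into the independent-set language: minimal vertex covers of $G$ are exactly the complements of maximal independent sets of $G$, so if $C$ is a minimal vertex cover then $V(G)\setminus C$ is a maximal independent set and $|C| = |V(G)| - |V(G)\setminus C|$. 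Hence the maximum of $|C|$ over minimal vertex covers equals $|V(G)|$ minus the minimum of $|F|$ over maximal independent sets $F$. Substituting this into the displayed equality yields
$$\T{depth}(R/I(G)) = |V(G)| - \T{bight}(I(G)) = \min\{|F| \ | \ F\subseteq V(G) \ \text{is a maximal independent set in}\ G\},$$
which is the first assertion.

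For the "moreover" part, recall that $R/I(G)$ is Cohen--Macaulay if and only if $\T{depth}(R/I(G)) = \dim(R/I(G))$. The Krull dimension of $R/I(G)$ equals the maximum size of a face of the independence complex, i.e. $\dim(R/I(G)) = \max\{|F| \ | \ F$ a maximal independent set of $G\}$ (equivalently $|V(G)|$ minus the smallest minimal vertex cover, i.e. the vertex-covering number). Combined with the first assertion, $R/I(G)$ is Cohen--Macaulay iff the minimum and the maximum cardinalities of a maximal independent set of $G$ coincide, i.e. iff all maximal independent sets of $G$ have the same cardinality — which is precisely the definition of $G$ being unmixed. I would also note the one-line consistency check that a vertex decomposable graph is always sequentially Cohen--Macaulay, so "Cohen--Macaulay $\Leftrightarrow$ unmixed" is exactly what one expects in this setting.

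The argument is essentially a bookkeeping exercise once Theorem \ref{bight1} is in hand, so there is no serious obstacle; the only points requiring a word of care are the two standard dictionary facts — that $\T{pd}(R/I(G))$ may be converted by Auslander--Buchsbaum (valid since $R$ is a polynomial ring, graded or after localizing at the irrelevant maximal ideal) and that $\dim(R/I(G))$ equals the size of the largest maximal independent set — together with the elementary complementation between vertex covers and independent sets. I would state these explicitly but not belabor them.
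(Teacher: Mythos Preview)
Your proposal is correct and follows essentially the same approach as the paper: both combine Theorem~\ref{bight1} with the Auslander--Buchsbaum formula to get $\T{depth}(R/I(G))=|V(G)|-\T{bight}(I(G))$, then use the complementation between minimal vertex covers and maximal independent sets, and finally compare with $\dim(R/I(G))$ (the paper cites \cite[Corollary 5.3.11]{VIL} for this) to obtain the Cohen--Macaulay characterization. Your added remark about vertex decomposability implying sequentially Cohen--Macaulay is a nice sanity check but is not in the paper's proof.
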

\begin{proof}
By applying Auslander-Buchsbaum formula for $R/I(G)$, we have
$$\T{pd}(R/I(G))+\T{depth}(R/I(G))=n,$$
where $n=|V(G)|$. So, by Theorem \ref{bight1},
$$\T{depth}(R/I(G))=n-\T{bight}(I(G)).$$
In view of the definition of the big height of $I(G)$, there exists a minimal vertex cover $C$
of $G$ with $\T{bight}(I(G))=|C|$. Since every minimal vertex cover $C'$ of $G$ corresponds
to the maximal independent set $F=V(G)\setminus C'$ and $C$ has the maximal cardinality
among minimal vertex covers, the first assertion holds.
Now, since $$\T{depth}(R/I(G))=\min \{|F| \ | \ F \ \text{is a maximal independent set in} \ G\}$$
and by \cite[Corollary 5.3.11]{VIL} we have
$$\dim(R/I(G))=\max \{|F| \ | \ F \ \text{is an independent set in}\ G\},$$

Cohen-Macaulayness is equivalent to say that all maximal independent sets in $G$ have the same cardinality. This yields the result.
\end{proof}

By considering the fact that forest graphs and sequentially Cohen-Macaulay bipartite graphs
are $C_5$-free vertex decomposable and applying Theorem \ref{bight1} and Corollary \ref{depth} one has:

\begin{cor}\label{cor2}
Let $G$ be a forest or a sequentially Cohen-Macaulay bipartite graph. Then
\begin{itemize}
\item[(i)] $\T{pd}(R/I(G))=d'(G)=\T{bight}(I(G))$.
\item[(ii)] $\T{depth}(R/I(G))=\min \{|F| \ | \ F\subseteq V(G) \ \text{is a maximal independent set in} \ G\}.$
\item[(iii)] the ring $R/I(G)$ is Cohen-Macaulay if and only if $G$ is unmixed.
\end{itemize}
\end{cor}

\textbf{Acknowledgments:}
We would like to thank the referee for his fruitful comments and suggestions. The research of the second author was in part supported by a grant from IPM (No. 900130065).

\providecommand{\bysame}{\leavevmode\hbox
to3em{\hrulefill}\thinspace}

\end{document}